\begin{document}

\newtheorem{theorem}{Theorem}[section]
\newtheorem{lemma}[theorem]{Lemma}
\newtheorem{proposition}[theorem]{Proposition}
\newtheorem{corollary}[theorem]{Corollary}
\newtheorem{definition}[theorem]{Definition}
\newtheorem{example}[theorem]{Example}

\newenvironment{proof}[1][Proof]{\begin{trivlist}
\item[\hskip \labelsep {\bfseries #1}]}{\end{trivlist}}
\newenvironment{remark}[1][Remark]{\begin{trivlist}
\item[\hskip \labelsep {\bfseries #1}]}{\end{trivlist}}

\newcommand{\qed}{\nobreak \ifvmode \relax \else
      \ifdim\lastskip<1.5em \hskip-\lastskip
      \hskip1.5em plus0em minus0.5em \fi \nobreak
      \vrule height0.75em width0.5em depth0.25em\fi}

\title{A Ganzstellensatz for semi-algebraic sets and a boundedness criterion for rational functions}

\author{Noa Lavi \\ \footnotesize{Department of Mathematics} \\ \footnotesize{The Hebrew University of Jerusalem} \\ \footnotesize{Givat Ram, Jerusalem 91904, Israel} \\                      \footnotesize{noa.lavi@mail.huji.ac.il} }
       
\maketitle

\begin{abstract}
Let $\langle K,\nu \rangle$ be a real closed valued field, and let $S\subseteq K^n$ be an open semi-algebraic set. Using tools from model theory, we find an algebraic characterization of rational functions which admit, on $S$, only values in the valuation ring. We use this result to deduce a criterion for a rational function to be bounded on an open semi algebraic subset of some irreducible variety over a real closed field or over an ordered field which is dense in its real closure.
\end{abstract}

\section {Introduction}

A ``Ganzstellensatz'' in algebraic geometry of valued fields is a theorem giving an algebraic characterization of rational functions whose values on a given definable set lie in the valuation ring (`ganze' elements). A Ganzstellensatz in $p$-adically closed fields was given as a $p$-adic analogue for Artin-Schreier theory of real closed fields, by Kochen and Cherlin, see \cite{ko} and \cite{cmta}. Later on, some Ganztsellens\"atze for rational functions on the valuation ring of some valued field was given by Prestel and Ripoll in \cite{PR}.\\ 
 A general model theoretic framework for proving Ganzstellens\"atze for theories of valued fields which are model complete was suggested by Haskell and Yaffe \cite{DY}, which could be seen as a model theoretic ``black box'' or ``oracle''. In the present paper we use this framework, in Proposition \ref{fw}, to obtain a Ganzstellensatz for open semi-algebraic sets over real closed valued fields. By further use of model theoretic properties, we also manage to deduce from it a criterion for boundedness of rational functions and polynomials over semi-algebraic sets over real closed fields, and even more general - over ordered fields which are dense in their real closure in respect to the order topology. \\

 The main result of the paper (Theorem~\ref{ganz}) states that if $K$ is a real closed valued field and $V$ is some irreducible real affine variety over it, then a rational function $h$ admits only integral values on an open semi-algebraic set $S_{\bar p}=\{ \bar x \in V\ : \ \bigwedge_{i=1}^m p_i(\bar x) > 0 \}$, exactly when $h$ is in the integral radical of the $O_K$-algebra generated by the set of functions of the form $\frac{1}{1+f}$, where $f$ is in the positive cone in $K(V)$ generated by the polynomials $p_1,...,p_m$ from $K[V]$ .  \\
In order to prove the above, one first has to show that all the elements in the candidate algebra indeed admit values only in the valuation ring, which is done in Lemma \ref{nearPos}. In order to show that it is sufficient, one needs to show that the ``sufficiency condition'' of the framework indeed holds. That could be summarized as giving an ordered valued field structure to the field of rational functions $K(V)$. That structure should be an extension of the structure on the ground field, in such a way that the formula defining the semi-algebraic set holds - $p_1,..,p_m$ shall be positive elements. This is done by some generalization of the Baer-Krull Theorem \cite{efr}. \\

We would like to explain further the relationship between the present paper and \cite{DY}.  The proof of theorem \ref{ganz} and \ref{ganz2} was done in the author's master thesis \cite{MA}. The starting point of this work was Theorem $4.12$ in \cite{DY} for functions in one variable. As a first stage for the master thesis, a generalization of that theorem to the case of one polynomial in many variables was proved using the ingredient from Baer-Krull \cite{efr}. A proof for that case was given as well in the published version of \cite{DY}. The next stage - and the main result of this paper - is a generalization to several polynomials $p_1,...,p_m$.\\
 
A natural question which arises in real algebraic geometry is about the boundedness of functions over semi-algebraic sets. That is, let $R$ be an arbitrary real closed field, and let $S$ be a semi-algebraic subset of $R^n$. How could we characterize the polynomials or the rational functions which are bounded on $S$? \\
There has been several works studying the ring of bounded polynomials on semi-algebraic sets. For example, by M. Schweighofer in \cite{markus} who studied the relationship of boundedness with sums of squares and positivity and proved a result about iterating the construction of the bounded polynomial rings on a semi-algebraic set. Another example is the work due to D. Plaumann and C. Scheiderer in \cite{dan}, who studied rather geometrical aspects. Here, we use the main result of the paper to deduce a criterion, in terms of the generators of the algebra (up to the integral closure), for $f \in R(V)$ to be bounded on an open semi-algebraic subset of $R(V)$, where $R$ is a real closed field and $V$ is an irreducible real affine variety. This criterion is the following:\\

\emph{ Let $V$ be an irreducible real affine variety and let $h \in R(V)$. Then $h$ is bounded on $S_{\bar p}$ if and only if $h$ is in the integral closure of $B$, where $B$ is the $R$-algebra generated by \[I_{\bar p} = \left\{ \frac{1}{1 + f}\ :\ f \in Cone({\bar p}) \right\}.\] }

We may also obtain such a criterion for fields which are dense in their real closure, such as $\mathbb{Q}$. \\

\emph{ Let R be an ordered field which is dense in its real closure, and let $h \in R(\bar x)$. Then $h$ is bounded on $S_{\bar p}$ if and only if $h$ is in the integral closure of $B$, where $B$ is the $R$-algebra generated by \[I_{\bar p} = \left\{ \frac{1}{1 + f}\ :\ f \in Cone({\bar p}) \right\}.\]} 

\subsection*{Remark of the editor}
All the localizations of $O_K$-algebra of $L=K(V)$ considered in this paper contain the real holomorphy ring $H$ of $L$. In Springer LNM 959, page 21, Theorem 2.16 it was shown by E.Becker that $H $is a Pr\"ufer domain. Therefore every over-ring of $H$ in $L$ is integrally closed (see Larsen-McCarthy: Multiplicative theory of ideals, page 134, Theorem 6.13). This applies in particular to the localizations $A_T$ in Definition $4.1$.

\section {Preliminaries} 

Given a valued field $\langle K,\nu \rangle$ we denote its value group by $\Gamma_\nu$, its valuation ring by $O_\nu = \{a\in K: \nu(a)\ge 0\}$, and its ideal of `infinitesimals' by ${\cal{M}}_\nu = \{a\in K: \nu(a) > 0\}$.  We might also use $\Gamma_K$, $O_K$ and ${\cal{M}}_K$ when the valuation $\nu$ is clear from context. 

We begin by defining the class of valued fields which is our object of interest.

\begin{definition}
An {\em ordered valued field} (i.e, $OVF$) is an ordered field equipped with a valuation $\nu$ satisfying $\forall x,y\in K: 0<x<y \Rightarrow \nu(x)\geq \nu(y)$.
\end{definition}

Equivalently one can require the valuation ring $O_\nu$ to be convex with respect to $\le$.  If $\langle K, \nu \rangle$ is some valued field then we will say that an order $\le_K $ on $K$ is {\em compatible with $\nu$} if $\langle K, \nu, \le_K \rangle \models OVF $. 

We now define a class of $OVF$ which satisfy a nice geometric condition.

\begin{definition}
A {\em real closed valued field} (i.e, $RCVF$) is an ordered valued field which is real closed, such that the valuation is non-trivial.
\end{definition}

Cherlin and Dickmann \cite{CD} proved that the theory $RCVF$ is the model companion of $OVF$.  The relevant consequence for us is that if $K$ is a $RCVF$, and $L$ is an $OVF$ extending $K$, then $K$ is {\em existentially closed} in $L$.  This means that if $\phi(\bar{X})$ is some quantifier free first-order formula with parameters from $K$ in the language $L_{ring}$ enriched by symbols for the valuation and the order, and $L\models \exists \bar{X}:\phi(\bar X)$ then there is some $\bar b\in K^n$ satisfying $\phi(\bar b)$.

\section {$OVF$-integrality} 
 
We say that a rational function $f$ over a valued field $\langle K,\nu \rangle$ is integral at some point $\bar b\in K^n$ if the function $f$ is defined at $\bar b$, and $f(\bar b)$ is in the valuation ring $O_\nu$. A motivation for a refined notion of integrality of a rational function at a point, that is, integrality at a point where $f$ is not defined, was given in section $2.2$ of \cite{DY}. 

Let $V$ be some irreducible real affine variety defined on $K$, and let $L$ denote the field of functions $K(V)$.  

\begin{definition}  \cite{DY}  \label{OVF-val}  
A valuation $\tilde{\nu}$ on $L$ which extends $\nu$ is called an {\em $OVF$-valuation} if there exists some order $\le_L$ on $L$ such that $\langle L,\tilde{\nu}, \le_L\rangle \models OVF $. 
\end{definition} 

An equivalent condition is $\frac{O_{\tilde\nu}}{{\cal M}_{\tilde\nu} }$ being formally real. Note that by definition any $OVF$-valuation on $L$ extends $\nu$. 

\begin{definition}  \cite{DY}  \label{valuation near point}
Let $\langle{K},\nu \rangle \models RCVF$, $\bar b \in V$. We will say that an OVF-valuation $\tilde\nu$ on $L$ is {\em near $\bar b$} if for every $f\in L$ such that $f(\bar b)=0$ and every $\gamma \in \Gamma_K$ we have $\tilde\nu(f)>\gamma$.
\end{definition}

For any $\bar b\in V$ there exist $OVF$-valuations near $\bar b$. Extend for example $\nu$ on $L=K(V)=K(\bar x+I(V) - \bar b)$ by setting $\tilde\nu(x_{i+1}+I(V)-b_{i+1}) > \tilde\nu K(x_1+I(V),...,x_i+I(V))$. The residue field doesn't change, hence $\tilde\nu$ is an $OVF$-valuation. 

\begin{definition}  \cite{DY}  \label{ovf-integrality}  
Let $\langle K, \nu \rangle \models RCVF $. Given $f\in L$ and $\bar b\in V$ we say that $f$ is {\em $OVF$-integral at $\bar b$} if for any OVF-valuation $\nu_{\bar b}$ on $L$ which is near $\bar b$ we have $\nu_{\bar b}(f)\ge 0$. 

For $S \subseteq K^n$ we say that $f$ is {\em $OVF$-integral on $S$} if $f$ is $OVF$-integral at ${\bar b}$ for every ${\bar b} \in S$.
\end{definition}

By existence of $OVF$-valuations near ${\bar b}$ it is easy to conclude that $OVF$-integrality is equivalent to naive integrality whenever $f$ is defined at ${\bar b}$. 
\begin{lemma} \label{integrality eq} \cite{DY}
Let $\langle K, \nu \rangle \models RCVF $ and let $f \in{L}$ and ${\bar b} \in V$ such that $f$ is defined at ${\bar b}$. Then $f$ is integral at ${\bar b}$ if and only if it is $OVF$-integral at ${\bar b}$. 
\end{lemma}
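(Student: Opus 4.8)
The plan is to show that the value of $f$ under \emph{any} OVF-valuation near $\bar b$ is already completely determined by the single element $f(\bar b)\in K$, so that OVF-integrality at $\bar b$ collapses to the naive condition $f(\bar b)\in O_\nu$.

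First I would fix a representation $f=p/q$ with $p,q\in K[\bar x]$ and $q(\bar b)\neq 0$, which is possible precisely because $f$ is defined at $\bar b$. Let $\tilde\nu$ be any OVF-valuation near $\bar b$; at least one such valuation exists (e.g.\ by Proposition~4.2 of \cite{DY}). The key step is an observation about polynomials: for any $g\in K[\bar x]$ the difference $g-g(\bar b)$ vanishes at $\bar b$, so by the definition of ``near $\bar b$'' we have $\tilde\nu(g-g(\bar b))>\gamma$ for every $\gamma\in\Gamma_K$. Hence if $g(\bar b)\neq 0$, then $\tilde\nu(g(\bar b))=\nu(g(\bar b))\in\Gamma_K$ because $\tilde\nu$ extends $\nu$, so $\tilde\nu(g-g(\bar b))>\tilde\nu(g(\bar b))$, and the ultrametric inequality forces $\tilde\nu(g)=\nu(g(\bar b))$; while if $g(\bar b)=0$ then directly $\tilde\nu(g)>\gamma$ for all $\gamma\in\Gamma_K$, in particular $\tilde\nu(g)>0$.

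Applying this to $p$ and $q$: since $q(\bar b)\neq 0$ we get $\tilde\nu(q)=\nu(q(\bar b))$, and then either $\tilde\nu(f)=\nu(p(\bar b))-\nu(q(\bar b))=\nu(f(\bar b))$ when $p(\bar b)\neq 0$, or $\tilde\nu(f)>\gamma$ for all $\gamma\in\Gamma_K$ (so in particular $\tilde\nu(f)\geq 0$) when $p(\bar b)=0$, which is consistent with $f(\bar b)=0\in O_\nu$. In every case $\tilde\nu(f)\geq 0 \iff f(\bar b)\in O_\nu$, and this equivalence holds \emph{uniformly over all} OVF-valuations $\tilde\nu$ near $\bar b$. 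The lemma now follows immediately: ``integral $\Rightarrow$ OVF-integral'' is read off directly from the equivalence, and for the converse one picks a single OVF-valuation near $\bar b$ (using its existence) and again reads off $f(\bar b)\in O_\nu$.

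I do not expect a serious obstacle — this is essentially a warm-up lemma whose content is that the ``near $\bar b$'' hypothesis pins $\tilde\nu$ down to be infinitely close to the evaluation-at-$\bar b$ place on all of $K[\bar x]$, not just on the coordinate functions $x_i-b_i$. The only point requiring a little care is the case analysis according to whether the numerator vanishes at $\bar b$, and keeping track that values of elements of $K$ under $\tilde\nu$ land in $\Gamma_K$ so that the strict ultrametric comparison is legitimate.
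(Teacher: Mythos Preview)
Your argument is correct. The paper itself does not give a proof of this lemma: it is stated with a citation to \cite{DY}, preceded only by the remark that ``by existence of OVF-valuations near $\bar b$ it is easy to conclude'' the equivalence. What you have written is precisely the natural unpacking of that remark --- the observation that $g-g(\bar b)$ vanishes at $\bar b$ forces $\tilde\nu(g)=\nu(g(\bar b))$ whenever $g(\bar b)\neq 0$, and then one applies this to numerator and denominator. There is nothing to compare against, and your handling of the case split on $p(\bar b)=0$ and the use of existence of at least one OVF-valuation near $\bar b$ for the converse direction are both in order.
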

The above follows immediately from the fact that for any $f \in L$ it holds that $\nu_{\bar b}(f)=\nu(f({\bar b}))$ whenever $f$ is defined and not zero at $\bar b$ and $\nu_{\bar b}$ is a valuation near $\bar b$.

\subsection{Properties of $OVF$-integrality}

We begin with a nice lemma which demonstrates the consequences of being an $OVF$-valuation near ${\bar b}$.

\begin{lemma} \label {nearPos}
Let $\langle K, \nu \rangle \models RCVF$, let $\nu_{\bar b}$ be an $OVF$-valuation on $L$ near ${\bar b}\in{V}$, and assume $p\in L$ satisfies $p({\bar b})>0$.  Then for every ordering $\le_L$ on $L$ which is compatible with $\nu_{\bar b}$ we have $p >_L 0$.
\end{lemma}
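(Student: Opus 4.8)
The plan is to argue by contradiction, exploiting the fact that an order compatible with $\nu_{\bar b}$ is completely determined on the ``units'' by the induced order on the residue field, together with the ``near $\bar b$'' hypothesis to pin down those residues as specializations at $\bar b$. Suppose toward a contradiction that $\le_L$ is compatible with $\nu_{\bar b}$ but $p <_L 0$ while $p(\bar b) > 0$.

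First I would reduce to the case where $p$ is a $\nu_{\bar b}$-unit. If $\nu_{\bar b}(p) > 0$, then since $p(\bar b)>0$ is a nonzero element of $K$, we would need $\nu_{\bar b}(p) = \nu_{\bar b}(p - p(\bar b)) > \gamma$ for all $\gamma\in\Gamma_K$ by the ``near $\bar b$'' property applied to $p - p(\bar b)$ (which vanishes at $\bar b$); but $p(\bar b)\in K$ has finite value, so $\nu_{\bar b}(p) = \nu_{\bar b}(p(\bar b) + (p - p(\bar b))) = \nu_{\bar b}(p(\bar b)) = \nu(p(\bar b)) \leq \gamma$ for suitable $\gamma$, a contradiction unless that value is finite; in any case $p$ and $p(\bar b)$ have the same (finite) $\nu_{\bar b}$-value, so $p$ is a unit exactly when $p(\bar b)\in O_\nu^\times$. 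If $\nu_{\bar b}(p) < 0$, then $\nu_{\bar b}(p(\bar b)) < 0$ too, and I can instead work with $1/p$, whose sign under $\le_L$ is opposite to that of $p$ and which satisfies $(1/p)(\bar b) > 0$ as well; so WLOG $\nu_{\bar b}(p)\geq 0$. If $\nu_{\bar b}(p) > 0$ strictly, the above computation already gives a contradiction with $p(\bar b)$ being a nonzero constant of finite value, so in fact $\nu_{\bar b}(p) = 0$ and $p$ is a unit.

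Now for the main step: since $\langle L, \nu_{\bar b}, \le_L\rangle\models \mathrm{OVF}$, the valuation ring $O_{\nu_{\bar b}}$ is convex, and therefore $\le_L$ induces an order $\le$ on the residue field $k_{\nu_{\bar b}} = O_{\nu_{\bar b}}/\mathcal{M}_{\nu_{\bar b}}$ by declaring $\bar u \geq 0$ iff some (equivalently any) lift of $\bar u$ to $O_{\nu_{\bar b}}$ is $\geq_L 0$. Consequently, for a unit $p$, the sign of $p$ under $\le_L$ coincides with the sign of its residue $\overline{p}$ in $k_{\nu_{\bar b}}$. I must therefore show $\overline{p} > 0$ in $k_{\nu_{\bar b}}$. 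The point is that the residue of $p$ is precisely $p(\bar b)$ viewed in $k_{\nu_{\bar b}}$: because $p - p(\bar b)$ vanishes at $\bar b$, the ``near $\bar b$'' hypothesis gives $\nu_{\bar b}(p - p(\bar b)) > \gamma$ for all $\gamma\in\Gamma_K$, in particular $\nu_{\bar b}(p-p(\bar b)) > 0$, so $p \equiv p(\bar b) \pmod{\mathcal{M}_{\nu_{\bar b}}}$ and $\overline{p} = \overline{p(\bar b)}$. Since $K\hookrightarrow L$ and $\nu_{\bar b}$ extends $\nu$, the residue map restricted to $O_\nu$ is injective into $k_{\nu_{\bar b}}$ and order-preserving (as $O_\nu$ is convex in $K$ and $O_{\nu_{\bar b}}$ convex in $L$ with compatible orders — here one uses that $\le_L$ restricted to $K$ is an order compatible with $\nu$, which must be $\le_K$ up to the constraints, but in fact we only need that a positive element of $O_\nu$ has positive residue). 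Since $p(\bar b) >_K 0$ lies in $O_\nu$, its residue $\overline{p(\bar b)}$ is $> 0$ in $k_{\nu_{\bar b}}$, hence $\overline{p} > 0$, hence $p >_L 0$, contradicting $p <_L 0$.

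The main obstacle I anticipate is the bookkeeping around whether $\le_L$ restricted to $K$ must agree with $\le_K$, or more precisely ensuring that the residue of a strictly positive element of $O_\nu$ is strictly positive in $k_{\nu_{\bar b}}$ — one does not want to accidentally assume more compatibility between $\le_L|_K$ and $\le_K$ than is given. This is handled cleanly by noting that $\langle K,\nu,\le_L|_K\rangle\models\mathrm{OVF}$ (convexity of $O_\nu$ in $L$ restricts to convexity in $K$), and in any ordered valued field a strictly positive unit has strictly positive residue: if $a >0$ is a unit but $\bar a \le 0$ in the residue field, then $\bar a = 0$ would force $a\in\mathcal{M}$ contradicting unit-ness, while $\bar a < 0$ means $-a + m >0$ for some $m\in\mathcal{M}$, i.e. $a < m$ with $0 < a$, and then convexity gives $a\in\mathcal{M}$, again a contradiction. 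Applying this inside $K$ to $a = p(\bar b)$ finishes the argument. The only other care needed is the reduction using $1/p$ in the $\nu_{\bar b}(p)<0$ case, which is routine since $p(\bar b)\neq 0$ guarantees $1/p$ is defined at $\bar b$ with value $1/p(\bar b) > 0$.
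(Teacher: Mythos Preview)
Your residue-field approach can be made to work, but the reduction step as written has a genuine gap. You correctly observe that $\nu_{\bar b}(p)=\nu(p(\bar b))$ (since $p-p(\bar b)$ has value cofinal in $\Gamma_K$), but then you claim that $\nu_{\bar b}(p)>0$ ``already gives a contradiction with $p(\bar b)$ being a nonzero constant of finite value''. It does not: $p(\bar b)$ may well lie in $\mathcal{M}_\nu\setminus\{0\}$, and then $\nu_{\bar b}(p)=\nu(p(\bar b))>0$ with no contradiction. In that case both $p$ and $p(\bar b)$ have residue $0$, and your sign-of-residue argument says nothing. (There is also a slip in the $\nu_{\bar b}(p)<0$ case: $1/p$ has the \emph{same} $\le_L$-sign as $p$, not the opposite one; the reduction still works once this is corrected, but it does not help with the $\nu_{\bar b}(p)>0$ case.) The clean fix is to replace $p$ by $p/p(\bar b)$: this is a $\nu_{\bar b}$-unit with value $1$ at $\bar b$, and it has the same $\le_L$-sign as $p$ because $p(\bar b)>_L 0$ (here one uses that $K$ is real closed, so $\le_L\!\upharpoonright_K=\le_K$; this also dissolves the worry in your last paragraph). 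After that normalization your residue argument goes through.

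For comparison, the paper's proof bypasses the residue field entirely and is a two-line direct application of the OVF axiom: if $p\le_L 0$ then $0<_L p(\bar b)\le_L p(\bar b)-p$, so $\nu_{\bar b}\big(p(\bar b)-p\big)\le \nu(p(\bar b))\in\Gamma_K$, contradicting the ``near $\bar b$'' hypothesis applied to $p(\bar b)-p$. Your route unpacks what is really the same inequality through the induced order on the residue field, at the cost of needing the unit normalization; the paper's argument avoids that bookkeeping altogether.
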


\begin{proof}
Assuming for a contradiction that $p\le_L 0$ we get $0 < p({\bar b})\le_L p({\bar b})-p$, and by the $OVF$ axiom we get $\gamma:=\nu(p({\bar b}))\ge \nu_{\bar b} \big(p({\bar b})-p\big)$.  However $\big(p({\bar b})-p\big)({\bar b})=0$ , and since $\nu_{\bar b}$ is a valuation near ${\bar b}$ the valuation $\nu_{\bar b}\big(p({\bar b})-p\big)$ is larger than any element of $\Gamma_K$, contradicting $\gamma\in\Gamma_K$. $\qed$
\end{proof}

\begin{remark}
(i)  The reverse implication is false, of course: even if $p\ge_L 0$ for every order $\le_L$ compatible with $\nu_{\bar b}$ we may only deduce $p({\bar b})\ge 0$ (for example consider $p(x)=(x-b)^2$).\\
(ii)  Lemma \ref{nearPos} was implicitly used in the proof of Theorem 4.12 of \cite{DY}, when justifying the necessity property. \\
(iii) Note that the proof is valid also for $\langle K, \nu, \le_K \rangle \models OVF$, but then we shall require that $\le_L$ extends $\le_K$.
\end{remark}

We recall a notation for the positive cone generated by a subset of some field.
\begin{definition} \cite{efr}
Let $L$ be any field, $P\subseteq L$ some subset.  The \emph{positive cone} of $P$ in $L$ is the minimal set $Cone(P)\subseteq L$ containing $P\cup (L)^2 $ which is closed under addition and multiplication. 
\end{definition}
  
Clearly if $\langle L,\le \rangle$ is an ordered field and $P$ is contained in the set $L^{\ge 0}$ of non-negative elements then $Cone(P)\subseteq L^{\ge 0}$.  Note that $L$ is formally real exactly when $-1\notin Cone(\emptyset)$, and that usually the term `cone' is used only when $-1$ is not in  $Cone(P)$.

\begin{proposition}  \label{genOVFint}
Let $\langle K, \nu, \le_K \rangle \models RCVF$. Given polynomials ${\bar p}=(p_1,\ldots,p_m)$ from $K[{V}]$, we define \[S_{\bar p} = \{{\bar b}\in V \ :\ \bigwedge_{i=1}^m p_i({\bar b})>0 \}.\] Then for every $f\in Cone({\bar p})$ the function $\frac{1}{1+f}$ is $OVF$-integral on $S_{\bar p}$.
\end{proposition}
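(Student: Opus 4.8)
The plan is to fix an arbitrary point $\bar b\in S_{\bar p}$ and an arbitrary OVF-valuation $\nu_{\bar b}$ on $L=K(\bar x)$ near $\bar b$, and to show directly that $\nu_{\bar b}\!\bigl(\tfrac{1}{1+f}\bigr)\ge 0$, equivalently that $\nu_{\bar b}(1+f)\le 0$. It is worth stressing at the outset why one cannot simply invoke naive integrality via Lemma~\ref{integrality eq}: although $f(\bar b)\ge 0$ whenever $f$ happens to be defined at $\bar b$ (a sum of products of the $p_i(\bar b)>0$ and of squares of values), a summand of $f$ of the form $g^2$ with $g\in K(\bar x)^\times$ may have a pole at $\bar b$ even though $\bar b\in S_{\bar p}$, so $f$ — and hence $\tfrac{1}{1+f}$ — need not be defined at $\bar b$, and the valued-field machinery is genuinely needed.

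The key step is to transport the defining positivity conditions of $S_{\bar p}$ from the point $\bar b$ into the field $L$. By Definition~\ref{OVF-val} choose an order $\le_L$ on $L$ with $(L,\le_L,\nu_{\bar b})\models OVF$. For each $i\in[m]$ we have $p_i(\bar b)>0$, so Lemma~\ref{nearPos} yields $p_i\ge_L 0$. Now the non-negative cone $L^{\ge 0}$ of $(L,\le_L)$ contains $(L^\times)^2$, contains every $p_i$, and is closed under addition and multiplication; by minimality of the positive cone this forces $Cone(\bar p)\subseteq L^{\ge 0}$, and in particular $f\ge_L 0$. Hence $1+f\ge_L 1>_L 0$, and in particular $1+f\ne 0$ in $L$, so $\tfrac{1}{1+f}$ makes sense.

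It then remains only to read off the valuation inequality from the OVF axiom. From $0<_L 1\le_L 1+f$ (together with the trivial case of equality) we get $\nu_{\bar b}(1+f)\le\nu_{\bar b}(1)=0$, whence $\nu_{\bar b}\!\bigl(\tfrac{1}{1+f}\bigr)=-\nu_{\bar b}(1+f)\ge 0$. Since $\nu_{\bar b}$ was an arbitrary OVF-valuation near $\bar b$, the function $\tfrac{1}{1+f}$ is OVF-integral at $\bar b$, and since $\bar b\in S_{\bar p}$ was arbitrary, it is OVF-integral on $S_{\bar p}$.

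The only delicate point — and the one I expect to be the main obstacle to stating correctly — is the first paragraph: one must not reduce to the defined case, and the positivity of the $p_i$ must be fed into the argument through Lemma~\ref{nearPos} (which is exactly where the hypothesis ``near $\bar b$'' is consumed) rather than through evaluation at $\bar b$. Everything after that is a short formal manipulation inside the ordered valued field $(L,\le_L,\nu_{\bar b})$.
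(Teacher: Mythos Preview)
Your proof is correct and follows essentially the same route as the paper's: fix $\bar b\in S_{\bar p}$ and an OVF-valuation $\nu_{\bar b}$ near $\bar b$, pick a compatible order $\le_L$, use Lemma~\ref{nearPos} to get $p_i\ge_L 0$, deduce $f\ge_L 0$ from the cone property, and then apply the OVF axiom to $0<_L 1\le_L 1+f$. Your additional remarks on why naive integrality cannot be invoked and why $1+f\neq 0$ are sound elaborations, but the underlying argument is identical to the paper's.
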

 
\begin{proof}
Fix some ${\bar b}\in S_{\bar p}$, and let $\nu_{\bar b}$ be any $OVF$-valuation near ${\bar b}$ on $L=K(V) $.  We need to show that $\nu_{\bar b}(\frac{1}{1+f})\ge 0$.  Now choose some ordering $\le_L$ on $L$ which is compatible with $\nu_{\bar b}$.  By Lemma \ref{nearPos} we obtain $p_i\ge_L 0$ ($1\le i\le m$), and since $f$ is in the cone generated by the polynomials $p_i$ we also get $f\ge_L 0$.  Therefore we have $0<1\le_L 1+f$, and we may conclude that $\nu_{\bar b}(1)\ge\nu_{\bar b}(1+f)$, or $\frac{1}{1+f} \in O_{\nu_{\bar b}}$, as required. $\qed$
\end{proof}

We now prove that being $OVF$-integral on a definable set coincides with admitting only integral values on it. 

\begin{proposition} \label{strong int}
Let $\langle K, \nu, \le_K\rangle \models RCVF$. Let ${\bar p}=(p_1,\ldots,p_m), {\bar g}=(g_1,\ldots,g_l)$ where $p_1,\ldots,p_m \in K[{V}] $ and $g_1,\ldots,g_l \in K({V}) $. Let \[S_{{\bar p},{\bar g}} = \left\{{\bar b} \in V \ : \ \bigwedge_{i=1}^m p_i({\bar b})>0, \bigwedge_{j=1}^l \nu(g_j({\bar b}))\ge 0\right\}. \]  Then for every $h \in K(V)$ we have that $h$ is $OVF$-integral on $S_{\bar p, \bar g}$ if and only if $h$ admits only integral values on $S_{\bar p, \bar g}$.
\end{proposition}
\begin{proof}
One direction follows directly from Lemma \ref{integrality eq}. For the other direction, let ${\bar b} \in S_{\bar p, \bar g}$ such that $h$ is not defined at ${\bar b}$ and suppose that $h$ is not $OVF$-integral at ${\bar b}$. Hence there exists $\nu_{\bar b}$ - an $OVF$-valuation which extends $\nu$ to $K(V)$ near ${\bar b}$ - such that $\nu_{\bar b}(h)<0$. Let $\le_{\bar b}$ be an order on $K(V)$ compatible with $\nu_{\bar b}$. According to Lemma \ref{nearPos} every such order satisfies ${\bar p} > 0 $. Hence, $\langle {{K}}({V}), \nu_b, \le_b \rangle \models \bigwedge_{i=1}^m p_i(\bar X)>0 \wedge \bigwedge_{i=1}^l \nu(q_i(\bar X)) \ge 0  \wedge \bigwedge_{i=1}^s q_i(\bar X)=0$, where $q_1, \ldotp q_s$ generate $I(V)$. Since ${K}$ is existentially closed in ${{K}}({V})$ there exists some $d \in S_{\bar p, \bar g}$ such that $\nu(h(d))<0 $. Hence $h$ doesn't admit only integral values on $S_{\bar p, \bar g}$. $\qed$
\end{proof} 

\section{Formulation of the general Ganzstellensatz}

We introduce here the definitions and the framework for proving a Ganzstellensatz. A motivation for these definitions, as well as a proof of Proposition \ref{fw} for a general theory of valued fields, can be found in \cite{DY}.

Let $\langle K,\nu \rangle$ be some valued field, $L$ a field extension of $K$. Let $A\subseteq L$ be some $O_K$-algebra such that $A \cap K = O_K $. Define $T=\{1+ma\ :\ m\in {\cal{M}}_K, a\in A\}$, and note that $T$ is a multiplicative set.

\begin{definition} \label{intT}
The \emph{integral radical} of $A$ in $L$ is defined as the integral closure in $L$ of the localization $A_T$, and will be denoted by $\sqrt[int]{A}$.
\end{definition}

Note that $\langle K,\nu \rangle$ and $L$ are omitted from the above notation for convenience.  

The following proposition is a special case of the model theoretic framework given in Lemma $2.17$ of \cite{DY}, with the simple modification of $K(V)$ instead of $K(\bar x)$. 
 
\begin{proposition}  \cite{DY}  \label{fw}
Let $\langle {K},\nu \rangle \models RCVF $ and let $S \subseteq {K}^n$ be a nonempty set defined by a formula $\phi_S({\bar X})$ in the language of ring enriched by symbols for the valuation and order. Let $L=K(V)$ where $V$ is an irreducible real affine variety over $K$, and let $A\subseteq L$ be some $O_K$-algebra such that $A \cap K = O_K $, which also admits the following properties: 
\\
\textbf{Necessity} If $S \subseteq V$ then every $f\in A$ is $OVF$-integral on $S$. 
\\
\textbf{Sufficiency}  Let $\tilde\nu $ be any valuation on $L$ extending $\nu$ such that $A \subseteq O_{\tilde\nu} $. Then there exists an order $\le_L$ on $L$ which is compatible with $\tilde\nu$, and such that $\langle {L},\tilde\nu, \le_L \rangle \models \phi_S({\bar x +I(V)}) $. \\

If $S \subseteq V$, then for every $h \in L $ we have that $h$ is $OVF$-integral on $S$ if and only if $h \in \sqrt[int]{A}$.
\end{proposition}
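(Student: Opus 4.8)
The plan is to prove the two implications separately. The direction ``$h\in\sqrt[int]{A}\Rightarrow h$ is OVF-integral on $S$'' is routine: fix ${\bar b}\in S$ and an OVF-valuation $\nu_{\bar b}$ near ${\bar b}$ (these exist). By Necessity $A\subseteq O_{\nu_{\bar b}}$, and since every OVF-valuation extends $\nu$ we have $\mathcal{M}_K\subseteq\mathcal{M}_{\nu_{\bar b}}$; hence for $t=1+ma\in T$ (with $m\in\mathcal{M}_K$ and $a\in A\subseteq O_{\nu_{\bar b}}$) we get $\nu_{\bar b}(ma)>0$, so $\nu_{\bar b}(t)=0$ and $t$ is a unit of $O_{\nu_{\bar b}}$. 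Thus $A_T\subseteq O_{\nu_{\bar b}}$, and as a valuation ring is integrally closed in its fraction field $L$, the integral closure $\sqrt[int]{A}$ of $A_T$ lies in $O_{\nu_{\bar b}}$; therefore $\nu_{\bar b}(h)\ge 0$, and since ${\bar b},\nu_{\bar b}$ were arbitrary, $h$ is OVF-integral on $S$.

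For the converse I would argue by contraposition: assuming $h\notin\sqrt[int]{A}$, I build a valuation $\tilde\nu$ on $L$ \emph{extending} $\nu$ with $A\subseteq O_{\tilde\nu}$ but $\tilde\nu(h)<0$, and then locate a point of $S$ witnessing the failure of integrality. For the construction of $\tilde\nu$: since $h$ is not integral over $A_T$, in the ring $R:=A_T[h^{-1}]$ the element $h^{-1}$ is a non-unit, and the key claim is that the ideal of $R$ generated by $\mathcal{M}_K\cup\{h^{-1}\}$ is still proper. Indeed, if $1=\mu+h^{-1}\rho$ with $\mu\in\mathcal{M}_K R$ and $\rho\in R$, then writing $\mu$ and $\rho$ as polynomials in $h^{-1}$ with coefficients in $\mathcal{M}_K A_T$, respectively in $A_T$, and clearing denominators yields a relation $(1-c_0)h^{N+1}=g(h)$ with $c_0\in\mathcal{M}_K A_T$ and $g$ of degree $\le N$ with coefficients in $A_T$. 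But the way $T$ is defined forces $\mathcal{M}_K A_T\subseteq\mathrm{Jac}(A_T)$: one checks that every maximal ideal of $A_T$ contains $\mathcal{M}_K$, using that $\mathcal{M}_K$ is totally ordered by divisibility (so every element of $\mathcal{M}_K A$ already has the form $ma$ with $m\in\mathcal{M}_K$, $a\in A$, hence is seen through $T$) and that $A\cap K=O_K$ (which guarantees $0\notin T$, so $A_T\ne 0$). Hence $1-c_0$ is a unit of $A_T$ and $h$ satisfies a monic equation over $A_T$, contradicting $h\notin\sqrt[int]{A}$. Therefore that ideal lies in a maximal ideal of $R$, and Chevalley's extension theorem produces a valuation ring $O_{\tilde\nu}\subseteq L$ dominating the corresponding localization, with $\mathcal{M}_K\cup\{h^{-1}\}\subseteq\mathcal{M}_{\tilde\nu}$. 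Then $A_T\subseteq O_{\tilde\nu}$, $\tilde\nu(h)<0$, and $\mathcal{M}_K\subseteq\mathcal{M}_{\tilde\nu}$ together with $O_K\subseteq O_{\tilde\nu}$ forces $O_{\tilde\nu}\cap K=O_K$, i.e.\ $\tilde\nu$ extends $\nu$.

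Now Sufficiency applies to $\tilde\nu$, so there is an order $\le_L$ with $\langle L,\tilde\nu,\le_L\rangle\models OVF$ and $\langle L,\tilde\nu,\le_L\rangle\models\phi_S({\bar x})$. Writing $h=P/Q$ with $P,Q\in K[{\bar X}]$ and $Q\ne 0$, the tuple ${\bar x}$ witnesses in this OVF the quantifier-free formula $\psi({\bar X}):=\phi_S({\bar X})\wedge Q({\bar X})\ne 0\wedge\tilde\nu(P({\bar X}))<\tilde\nu(Q({\bar X}))$. Embedding this OVF into some $M^*\models RCVF$ (possible since RCVF is the model companion of OVF) preserves $\psi$, so $M^*\models\exists{\bar X}\,\psi({\bar X})$; since $K$ is existentially closed in the RCVF $M^*$, there is ${\bar b}\in K^n$ with $K\models\psi({\bar b})$, i.e.\ ${\bar b}\in S$, $Q({\bar b})\ne 0$, and $\nu(h({\bar b}))<0$. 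Thus $h$ is defined at ${\bar b}\in S$ with $h({\bar b})\notin O_\nu$, so by Lemma~\ref{integrality eq} $h$ is not OVF-integral at ${\bar b}$, hence not on $S$. I expect the main obstacle to be the commutative-algebra claim of the middle paragraph — precisely the reason the multiplicative set $T$ and the integral radical are defined as they are; the rest is bookkeeping or a direct appeal to the model-completeness of RCVF.
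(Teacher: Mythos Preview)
Your argument is correct and follows essentially the same route as the paper's (which defers the proof to \cite{DY}, but whose outlined version uses exactly this contrapositive scheme: a Kochen-type lemma to produce $\tilde\nu\supseteq\nu$ with $A\subseteq O_{\tilde\nu}$ and $h\notin O_{\tilde\nu}$, then Sufficiency plus existential closedness of $K$ to pull back a witness into $S$); your middle paragraph is precisely a proof of that Kochen-type lemma, and your verification that $\mathcal{M}_K A_T\subseteq\mathrm{Jac}(A_T)$ is the standard one. One small simplification: you need not pass through an RCVF $M^*$ --- since $K\models\mathrm{RCVF}$ and $\langle L,\tilde\nu,\le_L\rangle\models\mathrm{OVF}$ extends $K$, the paper's preliminaries already give that $K$ is existentially closed in $L$ directly.
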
 

As the proposition has been slightly modified, we would like to reprove it here. In order to do so, we need the following Lemma from \cite{ko}, which we shall also use further in the paper.

\begin{lemma} \label{kochenl}
Let ${\langle K, \nu \rangle}$ be a valued field, ${L}$ a field extension of ${K}$ and $A$ a sub-ring of ${L}$ such that $A\cap {K} = O_K $. Then $\sqrt[int]{A} $ is the intersection of all valuation sub-rings $O_L $ of ${L}$ such that $A \subseteq O_L  $ and $O_L \cap {K} = O_K $.
\end{lemma}

\begin{proof} [proof of \ref{fw}.]
Suppose that $S \subseteq V$, as otherwise there is nothing to prove. One inclusion is obvious and follows directly from the necessity condition and some straightforward calculation. For the other direction, let $h \notin \sqrt[int]{A}$. By Lemma \ref{kochenl} there exists some $\tilde\nu$ a prolongation of $\nu$ to $L$ such that $A \subseteq O_{\tilde\nu}$ and $\tilde\nu(h)<0$. Then, by the sufficiency condition, there exists some $\le_L$ compatible with $\tilde\nu$, such that $\langle L, \tilde\nu, \le_L \rangle \models \phi_S(\bar x + I(V)) \wedge \nu(h)<0$. Since $K$ is existentially closed in $L$, there exists some $\bar b \in K^n$ such that $\langle K, \nu, \le_K \rangle \models \phi_S(\bar b) \wedge \nu(h(\bar b))<0$. Hence, $\bar b \in S$ and $\nu(h(\bar b))<0$ hence $h$ is not $OVF$-integral on $S$. $\qed$ 
\end{proof}

\section{A Ganzstellensatz for open semi-algebraic sets}

Let $K$ be a real closed valued field, $V$ some irreducible real affine variety over $K$, and $L=K(V)$. Let ${\bar p}=(p_1,\ldots,p_m)$ be a tuple of polynomials from $K[{V}]$, and let \[S_{\bar p} = \{{\bar b}\in V\ :\ \bigwedge_{i=1}^m p_i({\bar b})>0\}.\]  Assume that the set $S_{\bar p}$ is non-empty. 

Recall that $Cone({\bar p})$ in $L$, where $K[V] \subseteq L$, denotes the positive cone generated by the polynomials $p_i$. Since we assumed $S_{\bar p}\neq \emptyset$ we get $-1\notin Cone({\bar p})$, and we let $I_{\bar p} = \{\frac{1}{1 + f}\ :\  f\in Cone({\bar p})\}$. Considering the representation of a general element in $Cone(\bar p)$,  note that if we denote by $Q$ the set of sums of squares in $L$ then
$$I_{\bar p} = \Big\{ \big( 1 + \sum_{J\subseteq[m]} r_J \prod_{i\in J}p_i \big)^{-1} \ : \  r_J\in Q \Big\}.$$

Finally let $A_{\bar p}$ be the $O_K$-algebra generated by $I_{\bar p}$. Our goal in this section is to show that the $O_K$-algebra $A_{\bar p}$ satisfies the conditions of Proposition \ref{fw}.  We start with a few lemmas.  
   
\begin{lemma} \label {evenCase}
Let $\tilde\nu $ be any valuation on $L$ extending $\nu$ such that $A_{\bar p} \subseteq O_{\tilde\nu}$, and let $C = \{res(\frac{q}{c^2}) : q \in \langle p_1,...,p_m \rangle, c\in L, \tilde\nu(q)=\tilde\nu(c^2) \} $ where $\langle p_1,...,p_m \rangle $ is the multiplicative semi-group generated by $p_1,...,p_m $. Let $\ell$ denote the residue field of $(L,\tilde\nu)$. 

Then there exists an order $\le_\ell$ on $\ell$ such that $f \ge_\ell 0$ for every $f \in C $.
\end{lemma}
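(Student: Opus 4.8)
The plan is to construct $\le_\ell$ by exhibiting a subset $P\subseteq\ell$ which contains $C\cup(\ell^\times)^2$ and which is the positive cone of a genuine ordering; by the standard Artin–Schreier criterion it suffices to produce a cone (closed under $+$ and $\cdot$, containing all squares) with $-1\notin P$. The natural candidate is $P:=Cone(C)$, the positive cone in $\ell$ generated by $C$. So the whole lemma reduces to showing that $-1\notin Cone(C)$, i.e.\ that $C$ does not force a contradiction in the residue field. Note first that $C$ is itself closed under multiplication: if $\tilde\nu(q_1)=\tilde\nu(c_1^2)$ and $\tilde\nu(q_2)=\tilde\nu(c_2^2)$ with $q_1,q_2\in\langle p_1,\dots,p_m\rangle$, then $q_1q_2\in\langle p_1,\dots,p_m\rangle$ and $\tilde\nu(q_1q_2)=\tilde\nu((c_1c_2)^2)$, and $res(q_1q_2/(c_1c_2)^2)=res(q_1/c_1^2)\,res(q_2/c_2^2)$.

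First I would reduce a hypothetical relation $-1=\sum_k \bar s_k\,\bar t_k$ in $\ell$, with $\bar s_k$ a sum of squares and $\bar t_k\in C$, to a relation in $L$. Lift each $\bar t_k$ to $q_k/c_k^2$ with $\tilde\nu(q_k)=\tilde\nu(c_k^2)=:\delta_k$, and lift each $\bar s_k$ to an element $\sigma_k\in L$ which is a sum of squares with $\tilde\nu(\sigma_k)=0$; then $1+\sum_k \sigma_k q_k/c_k^2\in\mathcal M_{\tilde\nu}$. Clearing denominators by the common multiple $c^2:=\prod_k c_k^2$ (a square), we obtain an element of the form $c^2+\sum_k \sigma_k q_k d_k^2$ (where $d_k^2=c^2/c_k^2$) lying in $\mathcal M_{\tilde\nu}$, i.e.\ of strictly positive value, while each summand $\sigma_k q_k d_k^2$ has value $\geq 0$ and $c^2$ has value $0$. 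The key point is that $w:=c^2+\sum_k \sigma_k q_k d_k^2 = c^2\bigl(1+\sum_k\sigma_k q_k/c_k^2\bigr)$ has $\tilde\nu(w)>\tilde\nu(c^2)$, so $c^2/w\notin O_{\tilde\nu}$.

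The heart of the argument — and the step I expect to be the main obstacle — is deriving a contradiction from this from the hypothesis $A_{\bar p}\subseteq O_{\tilde\nu}$. The idea is that $w/c^2 = 1 + \sum_k \sigma_k (q_k/c_k^2) d_k^2/d_k^2$, rewritten so that each term $\sigma_k q_k d_k^2/c^2$ is, up to a square factor, an element of $Cone(\bar p)$ divided by a square that can be absorbed; more precisely one wants to present $w/c^2$, or a suitable square multiple of it, in the form $1+f$ with $f\in Cone(\bar p)$ — then $(1+f)^{-1}\in I_{\bar p}\subseteq A_{\bar p}\subseteq O_{\tilde\nu}$ forces $\tilde\nu(1+f)\leq 0$, i.e.\ $\tilde\nu(w)\leq\tilde\nu(c^2)$, contradicting $\tilde\nu(w)>\tilde\nu(c^2)$. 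Making this bookkeeping precise is delicate because $q_k$ ranges over the multiplicative semigroup $\langle p_1,\dots,p_m\rangle$ rather than over $Cone(\bar p)$ directly, so one must check that products of the $p_i$ times sums of squares land in $Cone(\bar p)$ (they do, by the explicit description $I_{\bar p}=\{(1+\sum_{J\subseteq[m]}r_J\prod_{i\in J}p_i)^{-1}\}$) and that the extra square denominators $c_k^2$ can be pulled out without leaving the cone. Once $-1\notin Cone(C)$ is established, I take $\le_\ell$ to be any total order whose positive cone extends $Cone(C)$, which exists by Artin–Schreier, and then $C\subseteq Cone(C)\subseteq\ell^{\ge 0}$ gives $f\ge_\ell 0$ for all $f\in C$, completing the proof.
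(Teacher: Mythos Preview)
Your proposal is correct and follows exactly the paper's approach: assume $-1\in Cone(C)$, lift the relation to $L$ to get $1+\sum_k\sigma_k q_k/c_k^2\in\mathcal M_{\tilde\nu}$, and contradict $I_{\bar p}\subseteq O_{\tilde\nu}$. The step you flag as ``delicate'' is in fact immediate and the detour through clearing denominators is unnecessary: since $1/c_k^2\in(L^\times)^2\subseteq Cone(\bar p)$ and $\langle p_1,\dots,p_m\rangle\subseteq Cone(\bar p)$ (the cone is multiplicatively closed and contains each $p_i$), each summand $\sigma_k\, q_k/c_k^2$ already lies in $Cone(\bar p)$, so $f:=\sum_k\sigma_k q_k/c_k^2\in Cone(\bar p)$ and $(1+f)^{-1}\in I_{\bar p}$ has negative $\tilde\nu$-value --- exactly the paper's contradiction.
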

\begin{proof}
We shall assume by way of contradiction that there is no such linear order, i.e, $-1 $ belongs to the cone generated by $\{ a^2 \ : \ a \in \ell\} \cup C $. Hence, there exist some $q_1,...,q_t \in \langle p_1,...,p_m \rangle $ and some $c_1,...,c_t \in {L} $ such that $\tilde\nu(c_i^2)=\tilde\nu(q_i) $, and there exist some $\rho_1,...,\rho_t \in \ell$ such that every $\rho_j $ is a sum of squares in $\ell$, satisfying \[
 \sum_{j=1}^{t} \rho_j res({\frac{q_j}{c_j^2}} )=-1 
.\] 
Let $r_1,...,r_t \in O_{\tilde{\nu}} $ be such that every $r_j $ is a sum of square elements in ${L} $ and $res(r_j)=\rho_j $. Then \[
1+\sum_{j=1}^t r_j  {(\frac{q_j}{c_j^2})} \in {\cal{M}}_L .
\]

Therefore the inverse of the above expression has negative valuation.  However this inverse clearly has the form $\frac{1}{1 + f}$ for some $f\in Cone({\bar p})$, contradicting $I_{\bar p}\subseteq O_{\tilde\nu}$.  $\qed$
\end{proof}

We now present the main tool for constructing an order on a valued field from the order on its residue field. We begin with the definition of semi-sections, which exist for any valued field, as Lemma \ref{semisec} will show.

\begin{definition}    
A {\em semi-section} of a valued field $\langle L,\mu \rangle $ is a map $s:\Gamma_L \rightarrow L^{\times}$ such that for any $\gamma \in \Gamma_L $ we have $\mu(s(\gamma))=\gamma $ and for every $ \gamma_1,\gamma_2 \in \Gamma_L $ we have \[\frac{s(\gamma_1+\gamma_2)}{s(\gamma_1)s(\gamma_2)} \in {L^{\times}}^2 .\]
\end{definition}

The following lemma is a special case of the Baer-Krull theorem, see for example \cite{efr}. We use it in this paper to construct from a semi-section and an order on the residue field, an order on the field which induces the order on the residue field.

\begin{lemma} \label{bk}
Let $\langle L, \mu \rangle$ be a valued field, and let $\le_\ell$ be an order on the residue field $\ell$.  For any semi-section $s$ of $\langle L, \mu \rangle$ we can define an order on $L$ by $x >_L 0 \Leftrightarrow res\big(\frac{x}{s(\mu(x))}\big)>_\ell 0$. Moreover, the order $\le_L$ induces $\le_\ell$. Finally, any order on $L$ which induces $\le_\ell$ is compatible with $\mu$.   
\end{lemma}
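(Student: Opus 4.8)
The plan is to verify the three assertions in turn, using the semi-section axioms to control how squares and the residue map interact. First I would check that the prescription $x >_L 0 \Leftrightarrow res\big(\frac{x}{s(\mu(x))}\big) >_\ell 0$ actually defines a linear order: the quotient $\frac{x}{s(\mu(x))}$ is a $\mu$-unit by the defining property $\mu(s(\gamma)) = \gamma$, so its residue is a nonzero element of $\ell$, hence is either $<_\ell 0$ or $>_\ell 0$; this gives the trichotomy. For compatibility with addition, given $x, y >_L 0$ I would compare valuations: if $\mu(x) < \mu(y)$ then $\mu(x+y) = \mu(x)$ and $res\big(\frac{x+y}{s(\mu(x))}\big) = res\big(\frac{x}{s(\mu(x))}\big) >_\ell 0$ since $\frac{y}{s(\mu(x))}$ is an infinitesimal; the symmetric case is the same, and if $\mu(x) = \mu(y) =: \gamma$ then $res\big(\frac{x+y}{s(\gamma)}\big) = res\big(\frac{x}{s(\gamma)}\big) + res\big(\frac{y}{s(\gamma)}\big)$, a sum of two $\ell$-positive elements, unless it is zero — but then $\mu(x+y) > \gamma$ and one must instead look at $res\big(\frac{x+y}{s(\mu(x+y))}\big)$, which requires the multiplicativity-up-to-squares axiom to relate $s(\mu(x+y))$ to $s(\gamma)$. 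This is where the semi-section property does real work, and I expect the bookkeeping for the $\mu(x+y) > \min$ subcase to be the fussiest point.

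For multiplicativity, suppose $x, y >_L 0$ and set $\gamma_1 = \mu(x)$, $\gamma_2 = \mu(y)$. Then
\[
res\Big(\frac{xy}{s(\gamma_1+\gamma_2)}\Big) = res\Big(\frac{x}{s(\gamma_1)}\Big)\, res\Big(\frac{y}{s(\gamma_2)}\Big)\, res\Big(\frac{s(\gamma_1)s(\gamma_2)}{s(\gamma_1+\gamma_2)}\Big),
\]
and the semi-section axiom says $\frac{s(\gamma_1+\gamma_2)}{s(\gamma_1)s(\gamma_2)} \in (L^\times)^2$, so the third factor is a nonzero square in $\ell$, hence $>_\ell 0$; since the first two factors are $>_\ell 0$ by hypothesis, the product is $>_\ell 0$, giving $xy >_L 0$. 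This is the step that genuinely needs "semi-section" rather than just a set-theoretic section of $\mu$.

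Next I would check that $\le_L$ induces $\le_\ell$. For a $\mu$-unit $u \in O_\mu^\times$, by definition $u >_L 0$ iff $res\big(\frac{u}{s(0)}\big) >_\ell 0$; since $\frac{s(0)^2}{s(0)s(0)} = 1 \in (L^\times)^2$ forces $res(s(0))$ to be a nonzero square in $\ell$, i.e. $res(s(0)) >_\ell 0$, we get $u >_L 0 \Leftrightarrow res(u) >_\ell 0$, which is exactly the statement that the induced order on $\ell$ is $\le_\ell$. Finally, for the last assertion — that any order $\le_L$ on $L$ inducing $\le_\ell$ is automatically compatible with $\mu$ — I would argue directly from the OVF axiom: one must show $O_\mu$ is $\le_L$-convex. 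Suppose $0 <_L x <_L y$ with $\mu(y) \ge 0$; if $\mu(x) < 0$ then $\mu(x/y) < 0$, so $x/y$ is a non-infinitesimal, non-unit with, say, $\mu(y/x) > 0$, meaning $y/x \in \mathcal{M}_\mu$ reduces to $0$ in $\ell$; but $0 <_L y/x$ (wait — $x <_L y$ gives $y/x >_L 1 >_L 0$), and since $\le_L$ induces $\le_\ell$ we would need $res(y/x) \ge_\ell$ something positive, contradicting $res(y/x) = 0 <_\ell 1$. So $\mu(x) \ge 0$, proving convexity. The main obstacle I anticipate is not any single step but organizing the addition-case analysis cleanly, particularly handling the degenerate subcase where cancellation pushes $\mu(x+y)$ strictly above $\min(\mu(x),\mu(y))$ without circular reasoning.
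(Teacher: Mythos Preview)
The paper does not actually prove this lemma: it is stated as a special case of the Baer--Krull theorem with a reference to Efrat's book \cite{efr}, and no argument is given. So there is no ``paper's own proof'' to compare against, and your direct verification is a reasonable thing to supply.

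Your argument is largely sound, but two points deserve correction. First, in the addition case with $\mu(x)=\mu(y)=\gamma$, the ``cancellation'' subcase you worry about cannot occur: if $res\big(\tfrac{x}{s(\gamma)}\big)>_\ell 0$ and $res\big(\tfrac{y}{s(\gamma)}\big)>_\ell 0$ then their sum is $>_\ell 0$, in particular nonzero, so $\tfrac{x+y}{s(\gamma)}$ is a unit and $\mu(x+y)=\gamma$. No further bookkeeping is needed. Second, your computation for $s(0)$ is miswritten: the semi-section axiom with $\gamma_1=\gamma_2=0$ gives $\tfrac{s(0)}{s(0)^2}=\tfrac{1}{s(0)}\in (L^\times)^2$, hence $s(0)\in (L^\times)^2$ and $res(s(0))$ is a nonzero square in $\ell$; what you wrote, $\tfrac{s(0)^2}{s(0)s(0)}=1$, is a tautology and proves nothing.

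The genuine gap is in your final paragraph. The hypothesis ``$\le_L$ induces $\le_\ell$'' only tells you about \emph{units}: for $u\in O_\mu^\times$, $u>_L 0 \Leftrightarrow res(u)>_\ell 0$. It says nothing directly about elements of $\mathcal{M}_\mu$, so you cannot derive a contradiction from $res(y/x)=0$ as you attempt. The fix is to pass to a unit: with $y/x\in\mathcal{M}_\mu$, the element $1 - y/x$ is a unit with residue $1>_\ell 0$, so by the inducing hypothesis $1 - y/x >_L 0$, i.e.\ $y/x <_L 1$; but $0<_L x<_L y$ gives $y/x >_L 1$, and that is the contradiction. With this repair your argument goes through.
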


\begin{lemma} \label {semisec}
Let $\tilde\nu $ be a valuation on ${L} $ extending $\nu $, and let $\le_\ell$ be an order on the residue field $\ell$ of $\langle L,\tilde\nu \rangle$.  Given $p_1,...,p_t \in{L}$ let $\gamma_i = \tilde\nu(p_i) \in \Gamma_L$ be their corresponding valuations.  Assume that $\big\{\frac {\gamma_i}{2\Gamma_L}\big\}_{i=1}^t $ are linearly independent over $\frac {\mathbb{Z}}{2\mathbb{Z}}$.  Then there exists an order $\le_L$ which induces $\le_\ell$, such that $p_i \ge_L 0 $ for every $1\le i \le t $.
\end{lemma}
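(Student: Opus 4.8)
The idea is to build a semi-section $s$ of $(L,\tilde\nu)$ whose values at $\gamma_1,\dots,\gamma_t$ are exactly $p_1,\dots,p_t$ (up to squares), and then apply Lemma \ref{bk} to get an order $\le_L$ inducing $\le_\ell$; positivity of each $p_i$ will then be automatic because $res(p_i/s(\gamma_i))$ will be a square in $\ell$, hence $\ge_\ell 0$. So the real content is the construction of the semi-section with prescribed values on a given finite $\mathbb{Z}/2\mathbb{Z}$-independent family.

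First I would set up the algebra. Consider the $\mathbb{F}_2$-vector space $\Gamma_L/2\Gamma_L$. By hypothesis the images $\bar\gamma_1,\dots,\bar\gamma_t$ are linearly independent there, so they extend to an $\mathbb{F}_2$-basis $\{\bar\gamma_i\}_{i\in I}$ of $\Gamma_L/2\Gamma_L$ with $\{1,\dots,t\}\subseteq I$. Choose a set-theoretic lift: for each $i\in I$ pick $\delta_i\in\Gamma_L$ with $\delta_i\mapsto\bar\gamma_i$, taking $\delta_i=\gamma_i$ for $i\le t$. Then every $\gamma\in\Gamma_L$ can be written uniquely as $\gamma = 2\eta + \sum_{i\in F}\delta_i$ for some finite $F\subseteq I$ and some $\eta\in\Gamma_L$ (the $F$ is determined by $\gamma\bmod 2\Gamma_L$; $\eta$ depends on an auxiliary choice of a section of $2\colon\Gamma_L\to 2\Gamma_L$, which is fine since $2\Gamma_L$ is a subgroup and we only need a set map). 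Now fix elements $u_i\in L^\times$ with $\tilde\nu(u_i)=\delta_i$, choosing $u_i = p_i$ for $i\le t$, and pick for every $\eta\in\Gamma_L$ some $v_\eta\in L^\times$ with $\tilde\nu(v_\eta)=\eta$. Define
\[
s(\gamma) = v_\eta^2 \prod_{i\in F} u_i \qquad\text{when } \gamma = 2\eta + \textstyle\sum_{i\in F}\delta_i .
\]
Then $\tilde\nu(s(\gamma))=2\eta+\sum_{i\in F}\delta_i=\gamma$, and $s(\gamma_i)=p_i\cdot(\text{square})$ for $i\le t$ (taking $\eta=0$, $F=\{i\}$), in fact $s(\gamma_i)=p_i$ by our choices.

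Next I would verify the semi-section condition $s(\gamma_1+\gamma_2)/(s(\gamma_1)s(\gamma_2))\in (L^\times)^2$. Writing $\gamma_j = 2\eta_j + \sum_{i\in F_j}\delta_i$, the sum $F$ attached to $\gamma_1+\gamma_2$ is the symmetric difference $F_1\triangle F_2$, since addition in $\Gamma_L/2\Gamma_L$ corresponds to symmetric difference of the coordinate supports. Hence in the quotient $L^\times/(L^\times)^2$ we have $s(\gamma_1+\gamma_2) \equiv \prod_{i\in F_1\triangle F_2}u_i$ and $s(\gamma_1)s(\gamma_2)\equiv \prod_{i\in F_1}u_i\prod_{i\in F_2}u_i \equiv \prod_{i\in F_1\triangle F_2}u_i$, because each $u_i$ with $i\in F_1\cap F_2$ appears squared. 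So the ratio is a square, as required. With $s$ in hand, Lemma \ref{bk} gives an order $\le_L$ on $L$ inducing $\le_\ell$; and since $s(\gamma_i)=p_i$, we get $res(p_i/s(\gamma_i)) = res(1) = 1 >_\ell 0$, so $p_i>_L 0$ for every $i\le t$.

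The main obstacle is the bookkeeping in the semi-section construction: one must be careful that the decomposition $\gamma = 2\eta+\sum_{i\in F}\delta_i$ is made by a genuine well-defined choice (the support $F$ is canonical, but the "$\eta$" part requires fixing, once and for all, a set-theoretic splitting of $\Gamma_L\twoheadrightarrow\Gamma_L/2\Gamma_L$ through the chosen basis), and that this does not interfere with the symmetric-difference computation modulo squares — which it does not, precisely because all the $\eta$-contributions enter as squares and thus vanish in $L^\times/(L^\times)^2$. A secondary point worth stating explicitly is the last sentence of Lemma \ref{bk}: the order produced is automatically compatible with $\tilde\nu$, so $\le_L$ is a legitimate OVF order, which is what the later application needs.
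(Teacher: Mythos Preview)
Your proposal is correct and follows essentially the same approach as the paper: extend the images of the $\gamma_i$ to an $\mathbb{F}_2$-basis of $\Gamma_L/2\Gamma_L$, lift to build a semi-section sending $\gamma_i$ to $p_i$ up to a square, and invoke Lemma~\ref{bk}. If anything, your write-up is more careful than the paper's in explicitly verifying the semi-section identity via the symmetric-difference computation in $L^\times/(L^\times)^2$; the only cosmetic point is that your formula gives $s(\gamma_i)=v_0^{\,2}p_i$ rather than literally $p_i$, but since $\tilde\nu(v_0)=0$ the residue $res\big(p_i/s(\gamma_i)\big)$ is a nonzero square in $\ell$ and hence $>_\ell 0$, so the conclusion is unaffected (or simply take $v_0=1$).
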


\begin{proof}
Let $\tilde\gamma_i=\frac {\gamma_i}{2\Gamma_L}  $ for $1 \le i \le t $ and
let $\{\tilde\gamma_i\}_{i\in \delta}$ be an extension to a base of $\frac {\Gamma_L} {2\Gamma_L}$ as a vector space over $\frac {\mathbb{Z}}{2\mathbb{Z}}$. Let $\tilde s : \frac {\Gamma_L} {2\Gamma_L} \rightarrow \frac {{L}^{\times}} {{{L}^{\times}}^2} $ be a group homomorphism such that $\tilde{s}(\tilde\gamma_i)=\tilde{p_i}$ for $1\le i\le t$ and $\tilde {p_i}$ is the image of $p_i$ in $\frac {{L}^{\times}} {{{L}^{\times}}^2} $. Our aim is to find a function $s$ such that the following diagram commutes and $\tilde\nu(g)=\tilde\nu(s(\tilde\nu(g))) $ for every $g \in {L} $. \\
\begin{center}
    $\begin{CD}
\Gamma_L @>s>> {L}^{\times} \\
@VV\pi_{2\Gamma_L} V     @VV\pi_ {{{L}^{\times}}^2} V\\
\frac {\Gamma_L} {2\Gamma_L} @>\tilde s>> \frac {{L}^{\times}} {{{L}^{\times}}^2}
\end{CD}$ \\
\end{center}
For every $\gamma \in 2\Gamma_L $ let $f_{\gamma} \in {L^{\times}}^2$ be such that $\tilde\nu(f_{\gamma})=\gamma $. Let $\{\gamma_i\}_{i \in \delta} $ be representatives for $\{\tilde\gamma_i\}_{i\in \delta} $ and let $f_{\gamma_i} \in L^{\times} $ be such that $\tilde\nu(f_{\gamma_i})=\gamma_i $ and lifting $\tilde{s}(\tilde\gamma_i) $. Define $s(\sum_{j=1}^t \gamma_{i_j} + 2\gamma )=\prod_{j=1}^t f_{\gamma_{i_j}}f_{2\gamma} $. Since every element in $\Gamma_L$ has a unique such representation $s$ is well defined. 
Thus $s$ is a semi-section, and by Lemma \ref{bk} we may define an order $\le_{L}$ on $L$ by  $g >_L 0 \Leftrightarrow res(\frac{g}{s(\tilde\nu(g))})>_\ell 0 $, and $\le_L$ induces $\le_\ell$. For every $1\le i \le t $ we have $s(\gamma_i)=p_ic_i^2 $ for some $c_i \in L$ with $\tilde\nu(c_i)=0 $, hence $res(\frac{p_i}{s(\gamma_i)})>0 $ and $p_i>_L 0 $. $\qed$
\end{proof}

Now, we are ready to prove the main theorem of the paper.

\begin{theorem} \label{ganz}
Let $\langle K, \nu \rangle $ be a real closed valued field, $V$ some irreducible real variety, and $L=K({V}) $. Let ${\bar p}=(p_1,..,p_m)$ where $p_1,...,p_m \in K[V]$.  Let \[S_{\bar p} = \{{\bar b}\in V\ :\ \bigwedge_{i=1}^m p_i({\bar b})>0\}. \] 

Let $I_{\bar p} = \left\{ \frac{1}{1 + f}\ :\ f \in Cone({\bar p}) \right\}$, and let $A_{\bar p}$ be the $O_K$-algebra generated by $I_{\bar p}$. \\

Then for every $h\in L$, $h\in \sqrt[int] {A_{\bar p}}$ if and only if $h$ admits only integral values on $S_{\bar p}$. 
\end{theorem}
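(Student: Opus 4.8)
The plan is to verify the two hypotheses of Proposition~\ref{fw} for the algebra $A=A_{\bar p}$ and the formula $\phi_S(\bar X)=\bigwedge_{i=1}^m p_i(\bar X)>0$; the theorem is then immediate. First I would record the routine observation that the set of rational functions which are OVF-integral on $S_{\bar p}$ is an $O_K$-subalgebra of $L$: for a fixed $\bar b$ this set is $\bigcap O_{\nu_{\bar b}}$, the intersection taken over all OVF-valuations $\nu_{\bar b}$ near $\bar b$, which is an intersection of valuation rings each containing $O_K$ (every OVF-valuation extends $\nu$), hence an $O_K$-algebra; intersecting further over $\bar b\in S_{\bar p}$ keeps this property. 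Since every generator $\frac{1}{1+f}$ of $A_{\bar p}$ with $f\in Cone(\bar p)$ is OVF-integral on $S_{\bar p}$ by Proposition~\ref{genOVFint}, the whole algebra $A_{\bar p}$ consists of functions OVF-integral on $S_{\bar p}$; this is exactly \textbf{Necessity}. En route this also gives $A_{\bar p}\cap K=O_K$: a constant $h\in A_{\bar p}\cap K$ is OVF-integral, hence by Lemma~\ref{integrality eq} integral, at any point of the nonempty set $S_{\bar p}$, so $\nu(h)\ge 0$; the reverse inclusion is clear.

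For \textbf{Sufficiency}, fix a valuation $\tilde\nu$ on $L$ extending $\nu$ with $A_{\bar p}\subseteq O_{\tilde\nu}$, and set $\gamma_i=\tilde\nu(p_i)\in\Gamma_L$ (all $p_i$ are nonzero, since $S_{\bar p}\neq\emptyset$). The key combinatorial step is to pass to the $\mathbb{Z}/2\mathbb{Z}$-vector space $\Gamma_L/2\Gamma_L$: after relabelling, choose $s\le m$ so that the images $\bar\gamma_1,\dots,\bar\gamma_s$ are a maximal subset of $\{\bar\gamma_1,\dots,\bar\gamma_m\}$ which is linearly independent over $\mathbb{Z}/2\mathbb{Z}$. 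Then for every $j\le m$ there is $J_j\subseteq\{1,\dots,s\}$ with $\bar\gamma_j=\sum_{i\in J_j}\bar\gamma_i$, so that $q_j:=p_j\prod_{i\in J_j}p_i$ lies in the multiplicative semigroup $\langle p_1,\dots,p_m\rangle$ and has $\tilde\nu(q_j)\in 2\Gamma_L$; pick $c_j\in L$ with $\tilde\nu(c_j^2)=\tilde\nu(q_j)$, so that $res(q_j/c_j^2)$ is a nonzero element of the set $C$ of Lemma~\ref{evenCase}.

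Now apply Lemma~\ref{evenCase} to obtain an order $\le_\ell$ on the residue field $\ell$ of $(L,\tilde\nu)$ with $f\ge_\ell 0$ for all $f\in C$; in particular $res(q_j/c_j^2)>_\ell 0$ for each $j$, since this residue is also nonzero. Because $\bar\gamma_1,\dots,\bar\gamma_s$ are linearly independent over $\mathbb{Z}/2\mathbb{Z}$, Lemma~\ref{semisec} applied to $p_1,\dots,p_s$ and this $\le_\ell$ yields an order $\le_L$ on $L$ inducing $\le_\ell$ with $p_i>_L 0$ for $1\le i\le s$; by Lemma~\ref{bk} this $\le_L$ is compatible with $\tilde\nu$. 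It remains to see $p_j>_L 0$ for $s<j\le m$: write $p_j=q_j\big/\prod_{i\in J_j}p_i$ with $q_j=c_j^2\cdot(q_j/c_j^2)$, where $c_j^2$ is a square, $q_j/c_j^2$ is a $\tilde\nu$-unit with residue $>_\ell 0$ (hence $>_L 0$, as $\le_L$ induces $\le_\ell$), and $\prod_{i\in J_j}p_i>_L 0$ as a product of the already-positive $p_i$ with $i\le s$. Therefore $\langle L,\tilde\nu,\le_L\rangle\models\bigwedge_i p_i(\bar x)>0$, which is \textbf{Sufficiency}, and Proposition~\ref{fw} gives the theorem.

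I expect the only real obstacle to be organizing the index partition correctly and keeping the relabelling consistent between Lemma~\ref{evenCase} (which controls the ``even'' part of the valuations through an order on the residue field) and Lemma~\ref{semisec} (which controls the ``odd'' part through a tailored semi-section), so that after gluing via Baer--Krull \emph{all} the $p_i$ turn out positive. There is no genuinely new analytic difficulty here: Lemmas~\ref{evenCase}, \ref{bk} and \ref{semisec} already carry out the substantive work, and the proof of Theorem~\ref{ganz} is their assembly together with Proposition~\ref{fw}.
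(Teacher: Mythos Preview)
Your proposal is correct and follows essentially the same route as the paper: verify Necessity via Proposition~\ref{genOVFint}, and for Sufficiency split the $p_i$ according to the $\mathbb{Z}/2\mathbb{Z}$-linear dependencies of the $\tilde\nu(p_i)$ in $\Gamma_L/2\Gamma_L$, invoke Lemma~\ref{evenCase} to order the residue field and Lemma~\ref{semisec} to build the semi-section, then glue via Lemma~\ref{bk} and feed both conditions into Proposition~\ref{fw}. Your write-up is in fact slightly more careful than the paper's in that you explicitly check $A_{\bar p}\cap K=O_K$ and that the $p_i$ are nonzero, but the architecture is the same.
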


\begin{proof}
In order to prove the theorem we shall prove that $A_{\bar p}$ satisfies the conditions of Proposition \ref{fw}. 

For the necessity condition we need to prove that for every $h \in A_{\bar p}$, $h$ is $OVF$-integral on $S_{\bar p}$. Since $A_{\bar p}$ is generated as an $O_K $-algebra by $I_{\bar p}$ it will be enough to prove that $h$ is $OVF$-integral on $S_{\bar p}$ for every $h \in I_{\bar p}$.  This is exactly the content of Proposition~\ref{genOVFint}.

If $V \subseteq K^n$ and $I(V)$ is generated by the polynomials $q_1, \ldots , q_l$, Then $S_{\bar p}$ may be identified to \[\Big\{ \bar b : \bigwedge_{i=1}^l q_i(\bar b)=0, \bigwedge_{i=1}^m p_i(\bar b) > 0 \Big\} \]
where we allow ourselves to call still $p_i$ any representative in $K[\bar x]$ of $p_i$. \\
In order to prove the sufficiency condition, we need to show that for every valuation  ${\tilde{\nu}}$ extending $\nu$ to ${L}$ such that $A_{\bar p}\subset O_{\tilde{\nu}}$, there exists an order $\le_L$ on $L$, compatible with $\tilde\nu$, such that $p_1,...,p_m > 0$. Where, of course, $\phi_S$ is the formula ``$\bigwedge_{i=1}^m p_i(\bar X)>0 \wedge \bigwedge_{i=1}^l q_i(\bar X)=0$''.
  
For $1\le i\le m$ let $\gamma_i = \tilde\nu(p_i)$, and denote by $\tilde{\gamma_i}$ the image of $\gamma_i$ in $\frac {\Gamma_L} {2\Gamma_L} $. Without loss of generality, let $p_1,...,p_t$ be such that $\{\tilde{\gamma_1},...,\tilde{\gamma_t}\}$ is a maximal independent subset of $\{\tilde{\gamma_1},...,\tilde{\gamma_m}\}$, over $\frac {\mathbb{Z}}{2\mathbb{Z}}$. So, for all $t+1 \le i \le m $  \[\tilde\nu(p_i\prod_{j=1}^t p_j^{s_{i,j}}) \in 2\Gamma_L\] for some $s_{i,j} \in \{0,1\} $. Therefore, by Lemma \ref{evenCase} and Lemma \ref{nearPos}, there exists an order $\le_\ell$ on the residue field $\ell$ of $L$ such that, for any order $\le_L $ on $L$ which induces $\le_\ell$, all elements in the set 
\[ \left\{\frac {p_i\prod_{j=1}^t p_j^{s_j}}{c_i^2}\ \ : \ t+1\le i\le m,c_i \in L, \tilde\nu(c_i^2)=\tilde\nu(p_i\prod_{j=1}^t p_j^{s_j})\right\}\] are strictly positive. \\
By Lemma \ref{semisec} there exists an order $\le_L$ inducing $\le_\ell$ such that $p_1,...,p_t\ge_L 0$. Hence $ {p_i\prod_{j=1}^t p_j^{s_{i,j}}}\ge_L 0$ for every $t+1\le i\le m $. Thus $p_1,...,p_m \ge_L 0$.  By the last part of Lemma~\ref{bk} any order on $L$ which induces an order on the residue field, in particular $\le_L$ is compatible with $\tilde\nu$. Hence, the sufficiency property holds. \\
As proved above, $A_{\bar p}$ satisfies the necessity and sufficiency properties, and therefore by Proposition \ref{fw}, $h\in \sqrt[int] {A_{\bar p}}$ if and only if $h$ is $OVF$-integral on $S_{\bar p}$. Hence, according to Proposition \ref{strong int},  $h\in \sqrt[int] {A_{\bar p}}$ if and only if $h$ admits values only in the valuation ring on $S_{\bar p}$.  $\qed$
\end{proof}

We also get a similar Ganzstellensatz for the intersection of $S_{\bar p}$ with finitely many valuation inequalities, with an almost identical proof.

\begin{theorem} \label{ganz2}
Let $\langle K, \nu \rangle $ be a real closed valued field and let $L=K({V}) $ where $V$ is some irreducible real affine variety. Let ${\bar p}=(p_1,\ldots,p_m), {\bar g}=(g_1,\ldots,g_l)$ where $p_1,\ldots,p_m \in K[{V}] $ and $g_1,\ldots,g_l \in K({V}) $. Let \[S_{{\bar p},{\bar g}} = \left\{{\bar b} \in V \ : \ \bigwedge_{i=1}^m p_i({\bar b})>0, \bigwedge_{j=1}^l \nu(g_j({\bar b}))\ge 0\right\}. \] 

Let $I_{\bar p} = \left\{ \frac{1}{1 + f}\ :\ f \in Cone({\bar p}) \right\}$, and
let $A_{{\bar p},{\bar g}}$ be the $O_K$-algebra generated by $ I_{\bar p} \cup \{g_1,...,g_l\}$ in $L$. \\
Let $h\in L$. Then $h\in \sqrt[int] {A_{{\bar p},{\bar g}}}$ if and only if $h$ is $OVF$-integral over $S_{{\bar p},{\bar g}}$. 
\end{theorem}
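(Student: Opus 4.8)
\section{Proof proposal for Theorem~\ref{ganz2}}

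The plan is to show that the $O_K$-algebra $A_{{\bar p},{\bar g}}$ satisfies the \textbf{Necessity} and \textbf{Sufficiency} hypotheses of Proposition~\ref{fw}, taken with $S=S_{{\bar p},{\bar g}}$ (which, after clearing denominators in the $g_i$, is cut out by an honest quantifier-free RCVF-formula $\phi_S({\bar X})$ asserting $\bigwedge_i p_i({\bar X})>0$ together with $\bigwedge_i \nu(g_i({\bar X}))\ge 0$); the equivalence in the theorem is then immediate from that proposition. As the section header suggests, almost everything is a verbatim transcription of the proof of Theorem~\ref{ganz}, the one genuinely new point being the treatment of the valuation inequalities, which turns out to be essentially free.

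For \textbf{Necessity} I would first note --- as is already used implicitly in the proof of Theorem~\ref{ganz} --- that the set of $h\in L$ that are OVF-integral on a fixed set $S$ is an $O_K$-subalgebra of $L$: any OVF-valuation near a point of $S$ extends $\nu$ and is a valuation, so it takes non-negative values on $O_K$ and respects sums and products of elements of non-negative value. Hence it suffices to check the generators $I_{\bar p}\cup\{g_1,\dots,g_l\}$. For $h\in I_{\bar p}$, Proposition~\ref{genOVFint} already gives that $h$ is OVF-integral on $S_{\bar p}$, and since $S_{{\bar p},{\bar g}}\subseteq S_{\bar p}$ this yields OVF-integrality on $S_{{\bar p},{\bar g}}$. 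For $h=g_i$ and any ${\bar b}\in S_{{\bar p},{\bar g}}$, the defining condition $\nu(g_i({\bar b}))\ge 0$ says precisely that $g_i$ is defined at ${\bar b}$ with $g_i({\bar b})\in O_\nu$, i.e.\ $g_i$ is naively integral at ${\bar b}$; by Lemma~\ref{integrality eq} it is then OVF-integral at ${\bar b}$, and so on all of $S_{{\bar p},{\bar g}}$.

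For \textbf{Sufficiency}, let $\tilde\nu$ extend $\nu$ to $L$ with $A_{{\bar p},{\bar g}}\subseteq O_{\tilde\nu}$. Since $A_{\bar p}\subseteq A_{{\bar p},{\bar g}}\subseteq O_{\tilde\nu}$, the construction used for Theorem~\ref{ganz} applies word for word: pass to a maximal $\tfrac{\mathbb Z}{2\mathbb Z}$-independent family among the images of $\tilde\nu(p_i)$ in $\tfrac{\Gamma_L}{2\Gamma_L}$, use Lemma~\ref{evenCase} to get an order $\le_\ell$ on the residue field making the relevant even-valuation products positive, use Lemma~\ref{semisec} to produce an order $\le_L$ inducing $\le_\ell$ in which the chosen $p_j$ are non-negative, and conclude via Lemma~\ref{bk} that $\le_L$ is compatible with $\tilde\nu$ and that $p_1,\dots,p_m>_L 0$ (they are non-zero in $L$). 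It remains only to see $\langle L,\tilde\nu,\le_L\rangle\models \bigwedge_i \nu(g_i({\bar x}))\ge 0$; but $g_i({\bar x})=g_i$ in $L$ and $\tilde\nu(g_i)\ge 0$ because $g_i\in A_{{\bar p},{\bar g}}\subseteq O_{\tilde\nu}$, a condition that does not involve the order. Hence the same $\le_L$ witnesses $\langle L,\tilde\nu,\le_L\rangle\models\phi_S({\bar x})$, and Proposition~\ref{fw} gives the conclusion.

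I do not anticipate a real obstacle: the delicate work --- building an order compatible with $\tilde\nu$ in which the $p_i$ are positive, via control of semi-sections and of the order on the residue field --- has already been done for Theorem~\ref{ganz} and transfers unchanged, precisely because the added hypothesis $g_i\in O_{\tilde\nu}$ is both exactly what one needs for the valuation part of $\phi_S$ and completely insensitive to the choice of order. The only items needing a line of care are the reduction of the membership conditions on $S_{{\bar p},{\bar g}}$ to a quantifier-free formula (clear denominators) and the standing bookkeeping facts $A_{{\bar p},{\bar g}}\cap K=O_K$ and $S_{{\bar p},{\bar g}}\neq\emptyset$, which are routine just as in Theorem~\ref{ganz}.
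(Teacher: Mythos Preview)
Your proposal is correct and follows exactly the approach the paper intends: the paper itself gives no separate proof for Theorem~\ref{ganz2}, remarking only that it has ``an almost identical proof'' to Theorem~\ref{ganz}, and what you have written is precisely that almost-identical proof, with the two extra observations (OVF-integrality of the $g_i$ via Lemma~\ref{integrality eq} for Necessity, and $g_i\in O_{\tilde\nu}$ automatically for Sufficiency) being the only additions.
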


In \cite{DYG}, a characterization of the rational functions over an algebraically closed field, that are defined on some algebraic set intersected by some valuative semi-algebraic set, and admit values only in the valuation ring, is given. Here, we show that their proof may apply also for real closed fields. Combining it with the main theorem of this paper, Theorem \ref{ganz}, we may deduce the boundedness characterization that we wish to obtain. \\
\begin{theorem} \label{zerogen}
Let $K$ be a real closed valued field, and let \[S_{{\bar p},{\bar q},{\bar f}} = \left\{{\bar b} \in {K}^n \ : \ \bigwedge_{i=1}^m p_i({\bar b})>0, \bigwedge_{j=1}^l q_j({\bar b})= 0, \bigwedge_{t=1}^s \nu(f_t(b)) \ge 0\right\}. \] Suppose that $S_{{\bar p},{\bar q},{\bar f}} \neq \emptyset $. Then for every $h \in K(\bar x)$ such that $h$ is defined on $S_{{\bar p},{\bar q},{\bar f}}$, we have that $h(S_{{\bar p},{\bar q},{\bar f}}) \subseteq O_K $ if and only if $h$ is in the integral closure of ${A}$, where $A$ is the $O_K$-algebra generated by $I(V({\bar q})) \cup I_{\bar p} \cup {\bar f}$, and $I(V({\bar q}))$ is the real ideal generated by $\bar q$.
\end{theorem}
\begin{remark}
Note that when $I(V({\bar q}))$ is prime, Theorem \ref{ganz2} already supplies the required characterization, without limiting the functions to be defined at every point in the set. However, when it is not prime, the requirement for the function to be defined at every point is indeed not redundant. For example, let $S$ be the zero set of $xy$ in $R^2$ for some real closed valued field $R$. The function $\frac{x}{y}$ admits only values in the valuation ring where it is defined. However, for any valuation $\tilde\nu$ near $(1,0)$, we have that the ideal generated by $y$ is in $O_{\tilde\nu}$, and $\tilde\nu(\frac{x}{y})<0$. Hence, $\frac{x}{y} \notin \sqrt[int]{I(S) \cup \{ \frac{1}{1+f^2} \: \ f \in K(\bar x) \} }$.
\end{remark}
\
\begin{proof} 
One direction is trivial, since any element of $A$ admits only values in the valuation ring. For the other direction, let $h \notin\sqrt[int]{A}$. Hence, by Lemma \ref{kochenl}, there exists a prolongation $\tilde\nu$ on $K(\bar x)$ such that $A \subseteq O_{\tilde\nu}$ and $\tilde\nu(h)<0$. Let \[I_{\tilde\nu}= \left\{ g \in K[\bar x] \ :\ \exists q \in I(V({\bar q})), \exists n \in \mathbb{N}, \tilde\nu(g) \ge n\tilde\nu(q) \right\} ,\] exactly as defined in the first theorem of \cite{DYG}. For every $q \in I(V({\bar q}))$ and for every $c \in K $, we have that $\tilde\nu(cq) \ge 0$, since $I(V({\bar q})) \subseteq A$. Hence, for every $q \in I(V({\bar q}))$, we have that $\tilde\nu(q) > \Gamma_K$. Hence, $1 \notin I_{\tilde\nu}$. Therefore, $I_{\tilde\nu}$ is clearly a proper ideal. It is prime, since if $q,q' \notin I(V({\bar q}))$ then for every $s \in I(V({\bar q}))$ and every $n \in \mathbb{N}$, we have that $\tilde\nu(qq') < 2\frac{s}{2n}=\frac{s}{n}$. It is real, since for every $a,b \in K(\bar x)$ we have that $\tilde\nu(a^2), \tilde\nu(b^2) \ge \tilde\nu(a^2+b^2)$. \\ Set $h=\frac{h_1}{h_2}$ where $h_1,h_2 \in K[\bar x]$, $(h_1;h_2)=1 $, $L=K(V(I_{\tilde\nu}))$ and let $\bar{\nu}$ be the valuation that  $\tilde\nu$ induces on $L$. We now would like to use the sufficiency condition in Proposition \ref{fw}, for $\phi_{S_{\bar p, \bar q \bar f}}$. That is, to show that there exists $\le_L$ such that $\langle L, \bar\nu, \le_L \rangle \models OVF$ and that $\langle L, \bar\nu, \le_L \rangle \models \phi_{S_{\bar p, \bar q \bar f}}\wedge \bar\nu(h_1)<\bar\nu(h_2) $.
 In order to make it meaningful and to suffice for proving our theorem, we must have $\bar\nu(h_2)<\infty$, which holds if and only if $h_2 \notin I_{\tilde\nu}$. First, we show that $S_{\bar p,\bar f}\cap V(I_{\tilde\nu})\neq\emptyset$. As in \cite{DYG}, let $\bar t \subset K[\bar x]$ be a set of generators of $I_{\tilde\nu}$. Due to the exact same argument for proving sufficiency in Theorem \ref{ganz} and \ref{ganz2}, there exists some $\le_L$ compatible with $\bar\nu$ such that $\langle L, \bar\nu, \le_L \rangle \models \phi_{S_{\bar p,\bar f}}\wedge \bar t = 0$. This implies that $V(I_{\tilde\nu}) \cap S_{{\bar p},{\bar f}} \neq \emptyset$. Hence, as $h$ is defined on $S_{{\bar p},{\bar q},{\bar f}} $, and in particular on each non-empty subset of it, we have that $h_2 \notin I_{\tilde\nu}$. Hence, we may obtain the sufficiency condition for our definable set. Hence, there exists $\bar b \in S_{{\bar p},{\bar q},{\bar f}} $ such that $\nu(h(b))<0$. $\qed$
\end{proof}

\section {Boundedness of rational functions on open semi-algebraic sets}
We now show how to use the main theorem of the paper, Theorem \ref{ganz}, in order to give a criterion for a rational function to be bounded on an open semi-algebraic set over some real closed field. For every non-archimedean real closed field $R$ there exists a canonical valuation which makes $R$ a model of $RCVF$ whose valuation ring is the convex hull of $\mathbb{Z}$ in $R$, denoted by $O_R$. This valuation ring is also referred to in the literature as the ring of holomorphy of $R$. \\ 
 In order to give a criterion for general boundedness, we first introduce the notion of weak boundedness.

\begin{definition}
Let $K \models RCVF$, let $V$ be an irreducible real affine variety and let $a \in K$. We say that $h \in K(V)$ is \emph {weakly bounded by $a$} on a set $S$ if $h(S) \subseteq O_{a}$, where $O_a=\cup_{n \in \mathbb{N}} a^nO_K$.
\end{definition}

As $O_a$ is a convex valuation ring, we get from Theorem \ref{ganz2} the following corollary.

\begin{corollary}\label{ballbounded}
Let $K \models RCVF$, let $V$ be an irreducible real affine variety and let $a \in K, a \ge 0$. For every $h \in K({V})$, we have that $h$ is weakly bounded by $a$ on $S_{{\bar p},{\bar g}}$ if and only if $h \in \sqrt[int]{{A_a}}$ where $A_a$ denotes the $O_a$-algebra generated by $I_{\bar p} \cup \{g_1,...,g_l\}$.
\end{corollary}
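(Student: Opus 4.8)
The plan is to deduce the Corollary from Theorem~\ref{ganz2} and Proposition~\ref{strong int} by rescaling, i.e.\ by replacing $h$ with $h/a$. The case $a=0$ is immediate: $B_\alpha=\{0\}$, so ``$h$ is bounded by $0$ on $S_{{\bar p},{\bar g}}$'' means $h$ vanishes on the nonempty open set $S_{{\bar p},{\bar g}}$, hence $h=0$, which matches $\sqrt[int]{{A_0}_T}=\{0\}$. So from now on assume $a\in K^{\times}$.

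For $a\in K^{\times}$, the function $h$ is defined at ${\bar b}$ iff $h/a$ is, and $h({\bar b})\in B_\alpha=aO_K$ iff $(h/a)({\bar b})\in O_K$. Hence ``$h$ is bounded by $a$ on $S_{{\bar p},{\bar g}}$'' is literally the statement that $h/a$ takes only integral values on $S_{{\bar p},{\bar g}}$ (a condition on those points of $S_{{\bar p},{\bar g}}$ at which $h/a$, equivalently $h$, is defined). Applying Proposition~\ref{strong int} to $h/a$ rephrases this as ``$h/a$ is OVF-integral on $S_{{\bar p},{\bar g}}$'', and Theorem~\ref{ganz2}, applied with the same ${\bar p}$ and ${\bar g}$, rephrases it further as $h/a\in\sqrt[int]{A_{{\bar p},{\bar g}}}$, that is, $h/a$ lies in the integral closure in $L$ of the localization $(A_{{\bar p},{\bar g}})_T$.

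The remaining, purely algebraic, task is to identify $a\cdot\sqrt[int]{A_{{\bar p},{\bar g}}}$ with $\sqrt[int]{{A_a}_T}$. Unwinding the definition of the $(-aO_K,+aO_K)$-algebra $A_a$ generated by $I_{{\bar p},{\bar g}}$, I would check two things: first, that localizing $A_a$ at its multiplicative set corresponds to localizing $A_{{\bar p},{\bar g}}$ at $T$ and then scaling by $a$ (using $A_{{\bar p},{\bar g}}\cap K=O_K$ to keep track of the denominators); and second, that $g$ is integral over $(A_{{\bar p},{\bar g}})_T$ iff $ag$ is ``$A_a$-integral'', the point being that an integral equation $g^N+\sum_{j<N}c_jg^j=0$ with $c_j\in(A_{{\bar p},{\bar g}})_T$ transforms into $(ag)^N+\sum_{j<N}(a^{N-j}c_j)(ag)^j=0$, whose coefficients $a^{N-j}c_j$ lie in exactly the $a$-scaled pieces out of which $A_a$ is built, and conversely. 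Composing all the displayed equivalences then yields ``$h$ is bounded by $a$ on $S_{{\bar p},{\bar g}}$'' $\Longleftrightarrow$ $h\in\sqrt[int]{{A_a}_T}$.

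I expect this last identification to be the only delicate point: one must fix the precise meaning of the $(-aO_K,+aO_K)$-algebra and verify that multiplication by $a$ commutes, in the appropriate bookkeeping sense, with both the localization at $T$ and the passage to the integral closure. All the substantive content is already packaged in Proposition~\ref{strong int} and Theorem~\ref{ganz2}, which are invoked as black boxes, so no further model theory is needed.
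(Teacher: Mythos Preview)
The paper states this corollary without proof, presenting it as an immediate consequence of Theorem~\ref{ganz2} together with Proposition~\ref{strong int}. Your rescaling argument --- reducing ``$h$ bounded by $a$'' to ``$h/a$ OVF-integral'' via Proposition~\ref{strong int}, then invoking Theorem~\ref{ganz2} --- is exactly the intended derivation, and you have correctly isolated the only residual issue: the paper never spells out what a ``$(-aO_K,+aO_K)$-algebra'' is, so the purely algebraic identification $a\cdot\sqrt[int]{A_{{\bar p},{\bar g}}}=\sqrt[int]{{A_a}_T}$ (and the meaning of $T$ on the right-hand side) has to be reverse-engineered from the statement, just as you do.
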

We may now obtain the following result about boundedness of rational functions on open semi-algebraic sets in any real closed valued field, whose value group has no maximal archimedean class.

\begin{corollary} \label{bounded}
Let $K \models RCVF$ such that $\Gamma_K$ has no maximal archimedean class, and let $V$ be an irreducible real affine variety. For every $h \in K({V})$, we have that $h$ is bounded on $S_{{\bar p},{\bar g}}$ if and only if $h$ is in the integral closure of ${{\cal A}}$, where $\cal A$ denotes the $K$-algebra generated by $I_{{\bar p}}\cup \{g_1,..g_l\}$.
\end{corollary}
\begin{proof}
Suppose that $h$ is bounded on $S_{{\bar p},{\bar g}}$ then there exists some $a \in K, a>0$ such that $-a < h(S_{{\bar p},{\bar g}}) <a$. Then obviously $h(S_{{\bar p},{\bar g}}) \subseteq O_{a}$, hence $h$ is weakly bounded by $a$ on $S_{{\bar p},{\bar g}}$. On the other hand, if there exist some $a > 0$ such that $h$ is weakly bounded by $a$ on $S_{{\bar p},{\bar g}}$, then, since $\Gamma$ has no maximal archimedean class, there exists some $c \in K$ such that $c \notin O_{a}$ positive, hence $-c < h(S_{{\bar p},{\bar g}}) < c$, hence $h$ is bounded on $S_{{\bar p},{\bar g}}$. Hence $h$ is bounded on $S_{{\bar p},{\bar g}}$ if and only if it belongs to the increasing union of $\sqrt[int]{A_{a}}$, which is exactly the integral closure of the $K$-algebra generated by $I_{{\bar p},{\bar g}}$. $\qed$
\end{proof}

We now use the compactness theorem in order to reduce the general case of a real closed field to a real closed valued field with a value group which has no maximal archimedean class. That is, we prove that being weakly bounded is equivalent to being bounded, also without the condition that the value group has no last archimedean component. 

\begin{theorem} \label {archi}
Let $R$ be any real closed field, let $V$ be an irreducible real affine variety, let $\bar p \subset K[V]$ and let $h \in R(V)$. Then $h$ is bounded on $S_{\bar p}$ if and only if $h$ is in the integral closure of $B$, where $B$ is the $R$-algebra generated by \[I_{\bar p} = \left\{ \frac{1}{1 + f}\ :\ f \in Cone({\bar p}) \right\}.\] 
\end{theorem}
\begin{proof}
Let $\hat{R}$ be the minimal elementary extension of $R$ realizing the type saying $\phi(X,a)=``0<X<a''$ for every $a \in R$. Let $\tilde{R}$ be the field obtained after $\omega$ such steps. Then $\tilde{R}$ equipped with its canonical valuation is a $RCVF$ model with a value group which has no maximal archimedean class. Suppose that $f \in R(V)$ is bounded on $S_{\bar p}$.  One can express that $f$ gets on $S_{\bar p}$ values bigger than $s$ or smaller than $-s$ for some $s>0$ on $S_{\bar p}$ by a first order formula, hence $h$ is bounded also on $S_{\bar p}(\tilde{R}) $. Hence, according to Corollary \ref{bounded}, $h$ is in the integral closure of $\tilde{B}$, where $\tilde{B}$ is the $\tilde{R}$-algebra generated by $I_{\bar p}$. Now, suppose by contradiction that $h$ is not in the integral closure of $B$. Hence, by Lemma \ref{kochenl}, there exists a valuation $\nu$ on $R(V)$ such that $B \subseteq O_{\nu}$ and $\nu(h)<0$. For every $a \in R^{\times}$ we have that $\nu(a)=0$, as there exist $f \in Cone(\bar p)$ with $\nu(\frac{1}{1+f})=0$. In fact, for every $f$ which is a sum of squares it is the case for either $f$ or $\frac{1}{f}$. Let $\tilde\nu$ be a prolongation of $\nu$ to $\tilde{R}(V)$ such that $\tilde\nu|_{\tilde{R}}$ is trivial. Then $\tilde\nu(h)<0$ and $\tilde{B} \subseteq O_{\tilde\nu}$. Hence, $h$ is not in the integral closure of $\tilde{B}$, which leads to a contradiction. $\qed$
\end{proof}

We may further deduce the above characterization for ordered fields with the property of being dense in their real closure, such as $\mathbb{Q}$.
\begin{corollary}
Let $F$ be an ordered field such that $F$ is dense in its real closure, and let $h \in F(\bar x)$. Then $h$ is bounded on $S_{\bar p}$ if and only if $h$ is in the integral closure of $B$, where $B$ is the $F$-algebra generated by \[I_{\bar p} = \left\{ \frac{1}{1 + f}\ :\ f \in Cone({\bar p}) \right\}.\]  
\end{corollary}
\begin{proof}
One direction is obvious. We are left to prove that if $h$ is bounded on $S_{\bar p}$ then $h$ is in the integral closure of $B$. We repeat the argument of the proof of Theorem \ref{archi}. Let $\tilde{F}$ be the real closure of $F$ and $B(\tilde{F})$ the $\tilde{F}$-algebra generated by \[I_{\bar p} = \left\{ \frac{1}{1 + f}\ :\ f \in Cone({\bar p}) \right\}.\] Suppose by contradiction that there exists some valuation $\nu$ on $F(\bar x)$ such that $B \subseteq O_{\nu}$ and $\nu(h)<0$. As in the proof of Theorem \ref{archi}, let $\tilde\nu$ be a prolongation of $\nu$ to $\tilde{F}(\bar x)$ such that $\nu|_{\tilde{F}}$ is trivial. Hence $\tilde\nu(h)<0$ and $B(\tilde{F})\subseteq O_{\tilde\nu}$, which leads to a contradiction with Theorem \ref{archi} which implies that $h$ is in the integral closure of $B(\tilde{F})$. Hence, $h$ is not bounded on $S_{\bar p}(\tilde{F})$. Since $F$ is dense in $\tilde{F}$ we get that $h$ is not bounded on $S_{\bar p}$. $\qed$
\end{proof}

\section{Acknowledgments}

The main part of this work was submitted as part of the author's M.Sc. thesis, supervised by Assaf Hasson at Ben-Gurion University, Israel.  The author wishes to thank from the bottom of her heart to Yoav Yaffe, for introducing her the question of work and the Baer-Krull ingredient which has been central in this work, and for his devoted guidance which without it the work probably would not have taken place. The main body of this paper (Sections 2-5) represents joint work with (and
was jointly written by) Yoav Yaffe, who supervised informally on the master thesis, and out of his own reasons and time constraints chose not to be a co-author of this paper. The author would like also to thank Jeff Burdges, Antongiulio Fornasiero and Liran Shaul for reading carefully the paper, and giving their useful comments.


\end{document}